\documentclass{article}
\usepackage{amsmath}
\usepackage{amssymb}
\usepackage{amsthm}
\usepackage[english]{babel}
\usepackage{times}
\usepackage[T1]{fontenc}
\usepackage[all]{xy}
\usepackage{geometry}

\theoremstyle{plain}
\newtheorem{teo}{Theorem}
\newtheorem{coro}[teo]{Corollary}
\newtheorem{lem}[teo]{Lemma}
\newtheorem{pro}[teo]{Proposition}

\theoremstyle{definition}

\newcommand{\B}{\mathbb{B}}
\newcommand{\HH}{\mathbb{H}}
\newcommand{\N}{\mathbb{N}}

\newcommand{\rr}{\mathbb{R}}
\newcommand{\s}{\mathbb{S}}

\newcommand{\p}{\partial}

\DeclareMathOperator{\ext}{ext}

\newcommand{\RR}{\mathbb{R}}
\newcommand{\BB}{\mathbb{B}}

\renewcommand{\SS}{\mathbb{S}}

\title{\bf The orthogonal projection on slice functions on the quaternionic sphere}
\author{Nicola Arcozzi\thanks{Partially supported by the PRIN project Real and Complex Manifolds of the Italian MIUR and by INDAM-GNAMPA}\\ 
\normalsize Dipartimento di Matematica, Universit\`a di Bologna \\ 
\normalsize Piazza di Porta San Donato 5, 40126 Bologna, Italy,  nicola.arcozzi@unibo.it \\
\and 
Giulia Sarfatti\thanks{Partially supported by INDAM-GNSAGA, by the PRIN project Real and Complex Manifolds and by the FIRB project Differential Geometry and Geometric Function Theory of the Italian MIUR}\\ 
\normalsize Dipartimento di Matematica, Universit\`a di Bologna \\ 
\normalsize Piazza di Porta San Donato 5, 40126 Bologna, Italy,  giulia.sarfatti@unibo.it \\}

\date{}

\begin{document}
\maketitle
\begin{abstract}
We study the $L^p$ norm of the orthogonal projection from the space of quaternion valued $L^2$ functions to the closed subspace of {\em slice} $L^2$ functions.
\end{abstract}

The aim of this short note is to study the orthogonal projection $\Pi$ from the space of quaternion valued $L^2$ functions to its closed subspace of {\em slice} $L^2$ functions. In particular, to compute the norm of the projection operator we will first show that we can write $\Pi$ in terms of a quaternionic {\em slice} Poisson kernel.

Let $\HH=\RR+\RR i +\RR j +\RR k$ denote the non commutative $4$-dimensional real algebra of quaternions and let $\BB=\{q\in\HH:\ |q|<1\}$ be the unit ball in $\HH$.
Its boundary $\partial \BB$ contains elements of the form $q=e^{It},\ I\in\SS,\ t\in\RR$, where $\SS=\{q\in \HH : q^2=-1\}$ is the two dimensional sphere of imaginary units in $\HH$.
We endow $\partial\BB$ with the measure $d\Sigma\left(e^{It}\right)=d\sigma(I)dt$, which is naturally associated with the Hardy space $H^2(\BB)$ 
of the slice regular functions on $\BB$, see \cite{AS}. We here normalize the measures so to have $\Sigma(\partial\BB)=\sigma(\SS)=1$. 
The functions we are concerned with here satisfy algebraic conditions and size conditions. 
A function $f:\p\BB\to\HH$ is a 
\it slice function \rm if for any $I,J\in \s$
\begin{equation}\label{repfor}
\begin{aligned}
f(e^{Jt})&=\frac{1}{2}\left[f(e^{It})+f(e^{-It})\right]+ \frac{ JI}{2}\left[f(e^{-It})-f(e^{It})\right]. 
\end{aligned}
\end{equation}
That is, slice functions are affine in the $\SS$ variable:
\begin{equation}\label{slice}
 f(e^{Jt})=a(t)+Jb(t),
\end{equation}
where $a,b:\partial\BB\to\HH$ are functions depending on $t$ alone. Condition \eqref{slice} has an algebraic nature. 
Equation \eqref{repfor} also provides a formula to extend a function $f_I$ 
defined on a slice $\p \B_I=\p \B\cap (\RR+\RR I)$ to a slice function $f:=\ext(f_I)$ defined on the entire sphere $\p\B$.

Slice functions were introduced in the more general setting of real alternative algebras by Ghiloni and Perotti in \cite{ghiloniperotti}. 
A notable class of slice functions is provided by the restriction to the unit sphere in $\HH$ of
slice regular functions. Such functions have been object of intensive research since the seminal work by Gentili and Struppa \cite{GSAdvances}. 
A concrete example is given by converging power series of the form $e^{It}\mapsto \sum_{n\in \mathbb{Z}}e^{Int}a_n$, with quaternionic coefficients $a_n$.

We refer to \cite{libroGSS} for results concerning slice regular functions and to \cite{Brackx} for functional analysis results in the quaternionic setting.

We denote by $L_s=L_s(\partial\BB)$ the class
of the measurable slice functions on $\partial\BB$, which has a natural structure of right linear space on $\HH$.

We will denote by $L^p=L^p(\partial\BB)$, $1\le p\le\infty$, the space of the functions $\varphi:\partial\BB\to\HH$ such that $\int_{\partial\BB}|\varphi|^pd\Sigma=:\|\varphi\|_p^p<\infty$,
and let $L^p_s:=L^p\cap L_s$. It is not difficult to prove that $L^2_s$ is a closed subspace of $L^2$ (see Proposition \ref{chiuso} below), hence we have an orthogonal projection operator $\Pi:L^2\to L^2_s$: $\Pi$ is self-adjoint, surjective, 
and $\Pi^2=\Pi$. 

Let 
$$
\|\Pi\|_{p,p}=\sup_{L^2\ni\varphi\ne0}\frac{\|\Pi\varphi\|_p}{\|\varphi\|_p}.
$$ 
Obviously, $\|\Pi\|_{2,2}=1$.

Our main goal here is computing the norm of $\Pi:L^p\to L^p_s$.
\begin{teo}\label{justthis}
\begin{enumerate}
 \item[(i)] $\|\Pi\|_{2,2}=1$, $\|\Pi\|_{\infty,\infty}=\frac{4}{3}$.
 \item[(ii)] Let $2\le p\le \infty$. Then, $\|\Pi\|_{p,p}\le2\left(\frac{2p-2}{3p-2}\right)^{\frac{p-1}{p}}.$
 \item[(iii)] If $1\le p,q\le \infty$ and $\frac{1}{p}+\frac{1}{q}=1$, then $\|\Pi\|_{p,p}=\|\Pi\|_{q,q}$.
\end{enumerate}
\end{teo}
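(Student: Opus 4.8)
The plan is to first convert $\Pi$ into an explicit integral operator and then reduce the whole problem to a single operator on the sphere $\SS$. First I would exploit that the measure is the product $d\Sigma=d\sigma\,dt$: for a.e.\ fixed $t$ the fiber $\{e^{Jt}:J\in\SS\}$ is a $2$–sphere parametrized by $J\in\SS$, and by \eqref{slice} the slice functions restrict on it to the \emph{affine} functions $J\mapsto a+Jb$, $a,b\in\HH$. So $\Pi$ acts fiberwise as the $L^2(\SS;\HH)$–orthogonal projection onto this subspace, for the real inner product $\langle g,h\rangle=\RRe\int_\SS\bar g h\,d\sigma$. Because $\int_\SS J\,d\sigma(J)=0$, the constant part $\{a\}$ is orthogonal to the linear part $\{Jb\}$, and minimizing $\int_\SS|g-a-Jb|^2\,d\sigma$ in $a,b$ gives $a=\int_\SS g\,d\sigma$ and $b=-\int_\SS Jg\,d\sigma$. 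Hence, with $g_t(J):=\varphi(e^{Jt})$,
\begin{equation*}
(\Pi\varphi)(e^{It})=\int_\SS(1-IJ)\,\varphi(e^{Jt})\,d\sigma(J)=:(Tg_t)(I),
\end{equation*}
so that $\Pi$ is the $t$–independent operator $T$ with slice Poisson kernel $P(I,J)=1-IJ$, and $\|\Pi\|_{p,p}$ is controlled fiber by fiber by the $L^p(\SS)\to L^p(\SS)$ behaviour of $T$.

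With the kernel in hand, (i) and (iii) are short. For $p=2$, $\|\Pi\|_{2,2}=1$ since $\Pi$ is a nonzero orthogonal projection. For $p=\infty$ I would compute $|1-IJ|^2=2+2\langle I,J\rangle=4\cos^2(\theta/2)$, where $\theta$ is the angle between $I$ and $J$, so that the $L^\infty\to L^\infty$ norm of $T$ is
\begin{equation*}
\sup_I\int_\SS|1-IJ|\,d\sigma(J)=\int_0^\pi 2\cos\tfrac{\theta}{2}\cdot\tfrac12\sin\theta\,d\theta=\tfrac43;
\end{equation*}
the upper bound is the triangle inequality and the matching lower bound comes from the aligned choice $g(J)=\overline{(1-IJ)}/|1-IJ|$, tensored with a cutoff in $t$. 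For (iii), the kernel is Hermitian, $\overline{P(I,J)}=\overline{1-IJ}=1-JI=P(J,I)$, so $\Pi$ is self-adjoint for the real pairing $\langle\varphi,\psi\rangle=\RRe\int_{\p\BB}\bar\varphi\psi\,d\Sigma$; its $L^p$–adjoint is the $L^q$ operator, whence $\|\Pi\|_{p,p}=\|\Pi^*\|_{q,q}=\|\Pi\|_{q,q}$.

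For (ii) the reduction to a fiber is decisive, because each fiber carries a \emph{probability} measure, so $\|Tg\|_{L^p(\SS)}\le\|Tg\|_{L^\infty(\SS)}$ for $2\le p\le\infty$. A pointwise H\"older estimate with exponents $(q,p)$ then gives, for every $I$,
\begin{equation*}
|Tg(I)|\le\int_\SS|1-IJ|\,|g(J)|\,d\sigma(J)\le\Big(\int_\SS|1-IJ|^{q}\,d\sigma(J)\Big)^{1/q}\|g\|_{p}.
\end{equation*}
I would evaluate the kernel integral as $\int_\SS|1-IJ|^{q}\,d\sigma(J)=\int_0^\pi(2\cos\tfrac{\theta}{2})^{q}\,\tfrac12\sin\theta\,d\theta=\frac{2^{q+1}}{q+2}$, and then simplify, using $q=p/(p-1)$,
\begin{equation*}
\Big(\tfrac{2^{q+1}}{q+2}\Big)^{1/q}=2\Big(\tfrac{2}{q+2}\Big)^{1/q}=2\Big(\tfrac{2p-2}{3p-2}\Big)^{(p-1)/p}.
\end{equation*}
Integrating the resulting fiberwise bound $\|Tg_t\|_{L^p(\SS)}\le f(p)\|g_t\|_{L^p(\SS)}$ in $t$ yields $\|\Pi\|_{p,p}\le f(p)$; together with (iii) this also covers $1\le p\le 2$.

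The real content, and the main obstacle, is the opening step: establishing that $\Pi$ decouples over the fibers and that its fiberwise action is exactly integration against $1-IJ$. This rests on the orthogonality $\int_\SS J\,d\sigma(J)=0$, which separates the constant and linear parts, and requires some care with the noncommutative, quaternion-valued inner product when computing the optimal $a,b$; once the slice Poisson kernel is identified, parts (i)--(iii) follow from the explicit integrals, self-adjointness, and the probability-space norm comparison above.
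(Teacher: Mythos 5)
Your proof is correct, and it reaches the paper's key integral formula $\Pi\varphi(e^{It})=\int_\SS(1-IJ)\varphi(e^{Jt})\,d\sigma(J)$ by a genuinely different route. The paper expands $\varphi$ in a Fourier series on almost every slice circle $\p\B_I$, shows (Proposition \ref{media}) that $\Pi$ averages the coefficients $a_n(I)$ over $\SS$, resums this into a kernel built from the Poisson kernel on each slice (Lemma \ref{formula}, Corollary \ref{bene}), and only then obtains the boundary kernel $1-IJ$ by letting $r\to1^-$, first for continuous $\varphi$ and then by density. You instead decompose $L^2(\p\B)$ as a direct integral over the $2$-sphere fibers $\{e^{Jt}:J\in\SS\}$, identify $L^2_s$ fiberwise with the $8$-real-dimensional right $\HH$-submodule of affine maps $J\mapsto a+Jb$, and compute the resulting finite-rank projection by least squares, the only input being $\int_\SS J\,d\sigma(J)=0$ (equivalently, that $\{1,\,J\}$ is an orthonormal system in that submodule). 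This is more elementary, works directly for $L^2$ functions, and bypasses the radial-limit and density step entirely; what it does not produce is the interior formula of Lemma \ref{formula} (the slice Poisson kernel for $r<1$), which the paper derives for its own sake. Once the kernel is in hand, your treatment of (i)--(iii) — H\"older with the $I$-independent constant $\left(\int_\SS|1-IJ|^q\,d\sigma(J)\right)^{1/q}$, the extremal function $\overline{(1-I_0J)}/|1-I_0J|$ for $p=\infty$, and self-adjointness plus duality for (iii) and for $1\le p<2$ — coincides with the paper's. A minor point in your favour: your evaluation $|1-IJ|=2|\cos(\theta/2)|$ is the correct one (the paper writes $2\sin(t/2)$), but the two expressions give identical integrals over $\SS$ under the symmetry $\theta\mapsto\pi-\theta$, so every constant, in particular $4/3$ and $2\left(\frac{2p-2}{3p-2}\right)^{(p-1)/p}$, comes out the same.
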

The general theory implies that $\Pi$ is a contraction on $L^2$ and the equality in (iii). 
The inequality in (ii) gives the right constant for $p=\infty$, but not for $p=2$.
Finding the exact value of $\|\Pi\|_{p,p}$ for all $p$'s might shed some light on the geometry of slice regular functions, and we think it is an interesting open problem.
The study presented in this paper began when discussing some problems related to Preprint \cite{S}. 

On route to the proof of Theorem \ref{justthis}, we shall write $\Pi$ as an explicit integral operator.

Let $\varphi \in L^2$. The measure $d\Sigma$ we are considering is the product of the (normalization of the) standard 
Lebesgue measure on the unit circle times the standard area element on the two sphere. 
Hence,  $\varphi$ belongs to the $L^2$ space of the unit circle $\p\B_I$ for almost every $I\in \s$, and 
we can expand $\varphi$ as a power series on almost each slice of the form
\[\varphi (e^{It})=\sum_{n=-\infty}^{\infty}e^{Int}a_n(I).\]
The coefficients depend on the imaginary unit $I\in \s$. 
\begin{pro}\label{media}
Let $\varphi \in L^2$. Then the orthogonal projection of $\varphi$ to the space of slice functions is
 \[\Pi \varphi (e^{Jt})=\sum_{n=-\infty}^{\infty}e^{Jnt}\tilde{a}_n\]
for any $e^{Jt}\in \p\B$, where the coefficients are given by the integral means
\[\tilde{a}_n=\int_{\s}d\sigma(I)a_n(I)\] 
 for any $n\in \mathbb{Z}$.
 \end{pro}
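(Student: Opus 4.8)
The plan is to exhibit an explicit orthonormal basis of $L^2_s$ consisting of slice monomials and then read off the projection as the associated Fourier expansion. I would equip $L^2$ with the $\HH$-valued inner product $\langle \psi, \varphi\rangle=\int_{\p\BB}\overline{\psi}\,\varphi\, d\Sigma$, which is conjugate-linear in the first slot and right-linear in the second, and which makes $L^2$ a right Hilbert $\HH$-module with $\Pi$ the orthogonal projection onto the closed (by Proposition \ref{chiuso}) submodule $L^2_s$. For each $n\in\mathbb{Z}$ let $E_n\in L_s$ be the slice monomial determined by $E_n(e^{Kt})=e^{Knt}$; these are genuine slice functions, being the extensions to $\p\BB$ of the restrictions of $q\mapsto q^n$ (and $q\mapsto \overline{q}^{\,|n|}$ for $n<0$) to a single slice. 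First I would record that, since exponentials on a single circle are orthonormal and $\sigma(\SS)=1$, one has $\langle E_n, E_m\rangle=\int_\SS\big(\tfrac{1}{2\pi}\int_0^{2\pi}e^{-Int}e^{Imt}\,dt\big)\,d\sigma(I)=\delta_{nm}$, so that $\{E_n\}_{n\in\mathbb{Z}}$ is orthonormal.

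The next step is to prove that $\{E_n\}_{n\in\mathbb{Z}}$ is in fact an orthonormal basis of $L^2_s$, i.e.\ that its closed right-$\HH$-span is all of $L^2_s$. Here I would use the representation formula \eqref{repfor}: writing $f(e^{Jt})=a(t)+Jb(t)$ as in \eqref{slice}, formula \eqref{repfor} forces $a$ to be even and $b$ to be odd in $t$ (take $a(t)=\tfrac12[f(e^{It})+f(e^{-It})]$ and $Jb(t)=\tfrac{JI}{2}[f(e^{-It})-f(e^{It})]$ and read off the parities). Expanding the quaternion-valued functions $a$ and $b$ in a cosine and a sine Fourier series respectively and regrouping the frequency-$n$ and frequency-$(-n)$ contributions, every slice $f\in L^2_s$ is written as $f=\sum_{n\in\mathbb{Z}}E_n c_n$ with convergence in $L^2$ guaranteed by Parseval on the circle; this is the completeness we need.

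With the basis in hand, the projection is the Fourier expansion $\Pi\varphi=\sum_{n\in\mathbb{Z}}E_n\langle E_n,\varphi\rangle$, and it remains only to identify the coefficients. Using the slice-wise expansion $\varphi(e^{It})=\sum_m e^{Imt}a_m(I)$, Fubini, and the orthogonality of circle exponentials, I would compute $\langle E_n,\varphi\rangle=\int_\SS\big(\tfrac{1}{2\pi}\int_0^{2\pi}e^{-Int}\varphi(e^{It})\,dt\big)\,d\sigma(I)=\int_\SS a_n(I)\,d\sigma(I)=\tilde a_n$, which is exactly the asserted formula. To keep the argument self-contained one can bypass the abstract basis expansion and verify the projection property directly: the Bessel-type bound $\sum_n|\tilde a_n|^2\le\sum_n\int_\SS|a_n(I)|^2\,d\sigma=\|\varphi\|_2^2$ (Cauchy--Schwarz in $I$, then Parseval slice-wise) shows $\sum_n E_n\tilde a_n$ converges in $L^2$ to a slice function, while $\langle E_m,\varphi-\sum_nE_n\tilde a_n\rangle=\tilde a_m-\tilde a_m=0$ for every $m$ shows the remainder is orthogonal to the total family $\{E_m\}$, hence to $L^2_s$.

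The main obstacle I anticipate is the completeness step: faithfully translating the geometric ``affine in the $\SS$ variable'' condition of \eqref{repfor} into the parity/Fourier condition and controlling the $L^2$ convergence of the regrouped series in the noncommutative setting. A secondary technical point is justifying the measurability of $I\mapsto a_n(I)$ and the use of Fubini for the $\HH$-valued integrals defining $\tilde a_n$, both of which rest on the fact that $\varphi$ belongs to $L^2$ of almost every slice circle.
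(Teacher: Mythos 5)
Your proof is correct and follows essentially the same route as the paper's: both arguments reduce to the two facts that slice $L^2$ functions are exactly the series $\sum_n e^{Int}\alpha_n$ with slice-independent coefficients, and that pairing $\varphi$ against such a function only sees the $\sigma$-averages $\tilde a_n$, so that $\varphi-\sum_n E_n\tilde a_n$ is a slice function plus a remainder orthogonal to $L^2_s$. The only differences are packaging (an explicit orthonormal basis $\{E_n\}$ versus the paper's direct computation of $\langle f,\varphi\rangle$ for a general slice $f$) and the fact that you actually justify the completeness step --- the evenness of $a$ and oddness of $b$ forcing the slice-wise Fourier coefficients to be constant in $I$ --- which the paper takes for granted when it writes an arbitrary $f\in L^2_s$ as $\sum_n e^{Int}\alpha_n$.
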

\begin{proof}
Let $f(e^{It})=\sum_{n\in \mathbb{Z}}e^{Int}\alpha_n \in L^2_s$. Taking into account that, for any measurable function $g:\p\B \to \HH$, $\int_{\pi}^{2\pi}g(e^{It})dt=\int_{0}^{\pi}g(e^{-It})dt$, which implies that 
\[\int_{\p\B}g(e^{It}) d\Sigma(e^{It})=\frac{1}{2\pi}\int_{0}^{2\pi}\int_{\s}g(e^{It})d\sigma(I)dt,\] we can write  
\begin{align*}
\langle f, \varphi \rangle_{L^2}&=\frac{1}{2\pi}\int_0^{2\pi}dt\int_{\s}d\sigma(I)\overline{\varphi(e^{It})}f(e^{It})=\int_{\s}d\sigma(I)\sum_{n=-\infty}^{\infty}\overline{a_n(I)}\alpha_n\\
&=\sum_{n=-\infty}^{\infty}\overline{\int_{\s}d\sigma(I)a_n(I)}\alpha_n=\sum_{n=-\infty}^{\infty}\overline{\tilde a_n}\alpha_n=\langle f, \tilde \varphi \rangle_{L^2}
\end{align*}
where $\tilde \varphi (e^{Jt})=\sum_{n\in \mathbb{Z}}e^{Jnt}\tilde{a}_n$ is a slice function. Hence $\Pi\varphi = \tilde \varphi$.
\end{proof}

We observe first that functions $f$ in $L^2_s$ extend as power series to the interior of the unit ball:

\begin{equation}\label{bilateral}
 f(re^{It})=\sum_{n=-\infty}^{+\infty}r^{|n|}e^{Int}\widehat{f}(n),\text{ where }\widehat{f}(n)=\frac{1}{2\pi}\int_{-\pi}^\pi e^{-nIs}f(e^{Is})ds\text{ is independent of }I.
\end{equation}

For $0\le r<1$, let $P^I_r$ be the Poisson kernel on the unit circle of the complex plane 
$\RR+\RR I \subset \HH$: \[P^I_r(t)=\frac{1-r^2}{|1-re^{It}|^2}=\sum_{n=-\infty}^{+\infty}r^{|n|}e^{Int}.\]
We point out that $P^I_r=P_r$ is independent of the imaginary unit $I$. In the following Lemma, we use the notation $P^I_r$ to stress the role of the imaginary unit in formula \eqref{formula2}.
\begin{lem}\label{formula}
For any $\varphi \in L^2$, the extension to the interior of $\B$ of its projection is given by
 $$
 \Pi\varphi(re^{It})=\int_{\partial\BB}K(re^{It},e^{Js}) \varphi (e^{Js}) d\Sigma(e^{Js}),
 $$
 where
 \begin{equation}\label{formula2}
 K(re^{It},e^{Js})= \frac{1}{2}\left[P^J_r(t+s)+P^J_r(t-s)\right]+\frac{IJ}{2}\left[P^J_r(t+s)-P^J_r(t-s)\right].
 \end{equation}
 
\end{lem}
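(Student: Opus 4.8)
The plan is to verify the integral representation by expanding every factor in a Fourier series on each slice and matching the result with the description of $\Pi\varphi$ already available. By Proposition \ref{media} together with the interior expansion \eqref{bilateral}, the extension of the projection to the interior of $\BB$ is
$$
\Pi\varphi(re^{It})=\sum_{n=-\infty}^{+\infty}r^{|n|}e^{Int}\tilde a_n,\qquad \tilde a_n=\int_{\SS}a_n(J)\,d\sigma(J),
$$
where $\varphi(e^{Js})=\sum_{n}e^{Jns}a_n(J)$. Thus it suffices to show that the integral operator on the right-hand side of the statement produces exactly this series.

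First I would reduce the surface integral over $\partial\BB$ to an iterated integral through the measure identity established in the proof of Proposition \ref{media}, namely $\int_{\partial\BB}g\,d\Sigma=\frac{1}{2\pi}\int_0^{2\pi}\int_{\SS}g\,d\sigma(J)\,ds$. Then I would insert the Fourier expansion $P^J_r(u)=\sum_{m}r^{|m|}e^{Jmu}$ of the Poisson kernel and the slice expansion of $\varphi$. For $r<1$ the Poisson series converges absolutely and uniformly in $s$, so interchanging sum and integral is legitimate and the orthogonality relation $\frac{1}{2\pi}\int_0^{2\pi}e^{J(m+n)s}\,ds=\delta_{m,-n}$ collapses the double sums. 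This computes the two building blocks
$$
\frac{1}{2\pi}\int_0^{2\pi}P^J_r(t-s)\varphi(e^{Js})\,ds=\sum_{m}r^{|m|}e^{Jmt}a_m(J),\qquad \frac{1}{2\pi}\int_0^{2\pi}P^J_r(t+s)\varphi(e^{Js})\,ds=\sum_{m}r^{|m|}e^{-Jmt}a_m(J).
$$

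Next I would combine these two pieces according to the structure of $K$ in \eqref{formula2}. The symmetric combination gives $\frac{1}{2}(e^{Jmt}+e^{-Jmt})=\cos(mt)$, while for the antisymmetric part one uses $\frac{1}{2}(e^{-Jmt}-e^{Jmt})=-J\sin(mt)$ followed by the quaternionic identity $IJ\cdot J=-I$ (coming from $J^2=-1$); this converts the $J$-dependence of the exponentials into the fixed imaginary unit $I$ and yields
$$
\frac{1}{2\pi}\int_0^{2\pi}K(re^{It},e^{Js})\varphi(e^{Js})\,ds=\sum_{m}r^{|m|}\left(\cos(mt)+I\sin(mt)\right)a_m(J)=\sum_{m}r^{|m|}e^{Imt}a_m(J).
$$
Integrating in $d\sigma(J)$ over $\SS$ and observing that $e^{Imt}$ is independent of $J$ replaces $\int_{\SS}a_m(J)\,d\sigma(J)$ by $\tilde a_m$, which reproduces the series for $\Pi\varphi(re^{It})$ and closes the argument.

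The step I expect to demand the most care is the recombination governed by \eqref{formula2}, precisely because quaternionic multiplication is non-commutative: one must keep the Poisson factors and the coefficient $IJ$ to the left of $\varphi$ and of the coefficients $a_m(J)$ at every stage, and the cancellation hinges on $J^2=-1$. Conceptually this is exactly the step that realizes the extension formula \eqref{repfor}: the kernel first Poisson-extends $\varphi$ along the slice $\RR+\RR J$ and then applies \eqref{repfor} to pass from the unit $J$ to the unit $I$, so the whole identity asserts that averaging this slice-wise extension over $J\in\SS$ coincides with the coefficient-averaging description of $\Pi$ furnished by Proposition \ref{media}.
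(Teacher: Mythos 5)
Your proposal is correct and rests on the same ingredients as the paper's proof (Proposition \ref{media}, the Fourier expansion on each slice, and the slice representation formula \eqref{repfor} realized via $J^2=-1$); the only difference is direction: the paper derives the kernel as $K(re^{It},e^{Js})=\sum_n r^{|n|}e^{Int}e^{-Jns}$ and then identifies it with \eqref{formula2} by observing it is slice in $e^{It}$, whereas you start from \eqref{formula2} and verify it reproduces the series for $\Pi\varphi$. Both computations are the same manipulation run in opposite directions, and your handling of the non-commutativity (keeping $a_m(J)$ on the right and the factor $IJ$ on the left) is exactly what the argument requires.
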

\begin{proof}
Fix $J\in \s$ such that the restriction $\varphi_J$ of $\varphi$ to $\p\B_J$ belongs to the $L^2$ space of the circle $\p\B_J$, and consider the harmonic extension of $\varphi_J$ 
to the interior of the disc $\B_J$,
\[\varphi_J(re^{Js})=\sum_{n=-\infty}^{\infty}r^{|n|}e^{Jns}a_n(J).\]
The coefficients of the projection $\Pi\varphi$, recalling Proposition \ref{media}, are given by
\[\tilde a_n =\int_{\s}d\sigma(J)a_n(J)=\int_{\s}d\sigma(J)\frac{1}{2\pi}\int_{0}^{2\pi}e^{-Jns}\varphi(e^{Js})ds.\]
Hence, for any  $I\in \s, t\in \rr$,
\begin{align*}
\Pi\varphi (re^{It})&=\sum_{n=-\infty}^{+\infty}r^{|n|}e^{Int}\tilde a_n=\int_{\s}d\sigma(J)\frac{1}{2\pi}\int_{0}^{2\pi}\sum_{n=-\infty}^{+\infty}r^{|n|}e^{Int} e^{-Jns}\varphi(e^{Js})ds\\
&=\int_{\p\B}K(re^{It}, e^{Js})\varphi (e^{Js}) d \Sigma(e^{Js})
\end{align*}
where
\[K(re^{It}, e^{Js})=\sum_{n=-\infty}^{+\infty}r^{|n|}e^{Int} e^{-Jns}.\]
To conclude, notice that $K(re^{It}, e^{Js})$ is a slice function with respect to the variable $e^{It}$, hence it can be written as 
\begin{align*}
K(re^{It}, e^{Js})&=\frac{1}{2}\left(K(re^{Jt}, e^{Js})+K(re^{-Jt}, e^{Js})\right)+\frac{IJ}{2}\left(K(re^{-Jt}, e^{Js})-K(re^{Jt}, e^{Js})\right)\\
&=\frac{1}{2}\left(\sum_{n=-\infty}^{+\infty}r^{|n|}e^{Jn(t-s)}+\sum_{n=-\infty}^{+\infty}r^{|n|}e^{Jn(-t-s)}\right)\\
&\hskip 3 cm +\frac{IJ}{2}\left(\sum_{n=-\infty}^{+\infty}r^{|n|}e^{Jn(-t-s)}-\sum_{n=-\infty}^{+\infty}r^{|n|}e^{Jn(t-s)}\right)\\
&=\frac{1}{2}\left(P_r^J(t-s)+P_r^J(-t-s))\right)+\frac{IJ}{2}\left(P^J_r(-t-s)-P^J_r(t-s)\right).
\end{align*}
\end{proof}

\begin{coro}\label{bene}
 We have that $\Pi\varphi(re^{It})=A(r,t)+I B(r,t)$, where
$$
 A(r,t)=\frac{1}{2\pi}\int_{-\pi}^\pi \frac{1}{2}\left[P_r(t+s)+P_r(t-s)\right]\left\{\int_\SS\varphi(e^{Js})d\sigma(J)\right\}ds
 $$
and
 $$
  B(r,t)=\frac{1}{2\pi}\int_{-\pi}^\pi \frac{1}{2}\left[P_r(t+s)-P_r(t-s)\right]\left\{\int_\SS J\varphi(e^{Js})d\sigma(J)\right\}ds.
 $$
\end{coro}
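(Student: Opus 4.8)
The plan is to substitute the explicit kernel \eqref{formula2} into the integral representation furnished by Lemma \ref{formula} and then to split the integrand into the two algebraically distinct pieces that will produce $A$ and $IB$. First I would rewrite the surface integral by means of the measure identity already established in the proof of Proposition \ref{media}, namely $\int_{\p\B}g\,d\Sigma=\frac{1}{2\pi}\int_{0}^{2\pi}\int_\SS g(e^{Js})\,d\sigma(J)\,ds$, so that
\begin{equation*}
\Pi\varphi(re^{It})=\frac{1}{2\pi}\int_{-\pi}^{\pi}\int_\SS K(re^{It},e^{Js})\,\varphi(e^{Js})\,d\sigma(J)\,ds,
\end{equation*}
where the interval of integration may be taken to be $[-\pi,\pi]$ by $2\pi$-periodicity.

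Next I would recall that $P^J_r=P_r$ is independent of $J$, so that the two combinations $\frac12[P_r(t+s)+P_r(t-s)]$ and $\frac12[P_r(t+s)-P_r(t-s)]$ appearing in \eqref{formula2} are \emph{real} scalars; consequently they commute with every quaternion and may be moved freely past $\varphi$, $I$ and $J$. Writing $K(re^{It},e^{Js})\varphi(e^{Js})$ as the sum of its first term $\frac12[P_r(t+s)+P_r(t-s)]\varphi(e^{Js})$ and its second term $\frac{IJ}{2}[P_r(t+s)-P_r(t-s)]\varphi(e^{Js})$, the first term integrates over $\SS$ and over $s$ to give exactly the real Poisson combination weighted by the inner average $\int_\SS\varphi(e^{Js})\,d\sigma(J)$, which is $A(r,t)$.

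The delicate point, and the only place where non-commutativity genuinely matters, is the second term. Here the unit $I$ is fixed and independent of the integration variables $s$ and $J$, so it may be factored out to the \emph{left} of the entire integral; the real combination $\frac12[P_r(t+s)-P_r(t-s)]$ commutes out as before; what must remain inside the $\sigma$-integral, multiplying $\varphi(e^{Js})$ on the left, is precisely the variable unit $J$. Thus this term becomes
\begin{equation*}
I\cdot\frac{1}{2\pi}\int_{-\pi}^{\pi}\frac12\left[P_r(t+s)-P_r(t-s)\right]\left\{\int_\SS J\varphi(e^{Js})\,d\sigma(J)\right\}ds=I\,B(r,t).
\end{equation*}
Adding the two contributions, and invoking Fubini to justify the interchange of the $s$- and $J$-integrations (legitimate since $\varphi\in L^2\subset L^1$ and the Poisson kernel is bounded for fixed $r<1$), yields $\Pi\varphi(re^{It})=A(r,t)+I\,B(r,t)$, as claimed. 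I expect the main obstacle to be purely a matter of bookkeeping: keeping track of the correct left/right placement of the non-commuting factors, so that the fixed unit $I$ emerges on the left while the variable unit $J$ stays paired with $\varphi$ inside the average over $\SS$.
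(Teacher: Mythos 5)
Your argument is correct and is exactly the direct substitution the paper intends: Corollary \ref{bene} is stated without proof precisely because it follows from plugging \eqref{formula2} into Lemma \ref{formula}, using that the Poisson combinations are real (hence central) and factoring the fixed unit $I$ out to the left while $J$ stays paired with $\varphi$ inside the $\sigma$-average. Your bookkeeping of the non-commuting factors matches the paper's formulas, so there is nothing to add.
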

In $A$ and $B$ the function $\varphi$ enters, respectively, through its ``imaginary mean'' and ``imaginary first moment'' on each copy of $\SS$ inside $\partial\BB$.

Passing in the limit as $r\to1^-$, using the fact that the classical Poisson kernel as linear operator tends to the Dirac delta function, we obtain -at least when $\varphi\in C=C(\partial\BB,\HH)$ is continuous on $\partial\BB$-:
\begin{equation}\label{meglio}
 \Pi\varphi(e^{tI})=\int_\SS(1-IJ)\varphi(e^{Jt})d\sigma(J).
\end{equation}
By density of $C$ in $L^p$ ($p<\infty$), the integral formula for $\Pi$ extends to $L^p$ if $\Pi$ is bounded on $L^p$. Since $\Pi$ is bounded on $L^2$, by inclusion 
it is defined on  $L^p$'s at least when $2\le p\le\infty$. Formula \eqref{meglio} is in this case true for $a.e.$ $e^{tI}$ in $\partial\BB$.

Consider the case $p=\infty$ of the theorem first. Taking the essential supremum, we have that
\begin{align}\label{ottimo}
 &\|\Pi\varphi\|_\infty\le\int_\SS|1-IJ|d\sigma(J)\|\varphi\|_\infty,
\end{align}
with equality if $\varphi(e^{Js})=|1-I_0J|(1-I_0J)^{-1}$  for some $I_0\in\SS$. Using spherical coordinates on $\SS$  such that $I=(0,0,1)$ and $J=(\sin t \cos s, \sin t \sin s, \cos t )$, with $s \in (0,2\pi)$ and $t \in (0,\pi)$, we get that $|1-IJ|=2\sin(t/2)$, $d\sigma(J)=\sin(t)dtds$,
and hence that  
\begin{equation}\label{sferica}
\int_\SS|1-IJ|d\sigma(J)=4/3,
\end{equation}
thus concluding the proof of Part $(i)$ of Theorem \ref{justthis}. 
Similarly, when $2\le p<\infty$ 
we have, with $\frac{1}{p}+\frac{1}{q}=1$,
\begin{equation}\label{dipiu}
 \|\Pi\varphi\|_p\le\left(\int_\SS|1-IJ|^qd\sigma(J)\right)^{1/q}\|\varphi\|_p.
\end{equation}
We have used H\"older's inequality in \eqref{meglio} and the fact that the first factor on the right of \eqref{dipiu} is independent of $I$.
A simple calculation with the same spherical coordinates on $\SS$ used to obtain \eqref{sferica} gives
$$
\left(\int_\SS|1-IJ|^qd\sigma(J)\right)^{1/q}=2\left(\frac{2}{q+2}\right)^{1/q}=2\left(\frac{2p-2}{3p-2}\right)^{\frac{p-1}{p}},
$$
and the proof of Theorem \ref{justthis} is finished. 

\vskip.5cm

\begin{pro}\label{chiuso}
$L^2_s$ is a closed subspace of $L^2$.
\end{pro}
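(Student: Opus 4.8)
The plan is to exhibit $L^2_s$ as the fixed-point set of an explicit bounded linear operator $S$ on $L^2$; since such a set is the kernel of the continuous operator $S-\mathrm{Id}$, it is automatically closed. Concretely, for $\varphi\in L^2$ and a.e. $t$ I would form the ``imaginary mean'' and ``imaginary first moment''
\[
a_\varphi(t)=\int_\SS\varphi(e^{Jt})\,d\sigma(J),\qquad b_\varphi(t)=-\int_\SS J\,\varphi(e^{Jt})\,d\sigma(J),
\]
and set $S\varphi(e^{Jt})=a_\varphi(t)+J\,b_\varphi(t)$. By construction $S\varphi$ has the affine form \eqref{slice}, so $S\varphi\in L_s$ wherever it is defined. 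This is in fact the operator underlying Corollary \ref{bene}, but I would introduce it directly so as to avoid any circularity with the construction of $\Pi$, whose very definition presupposes the present Proposition.

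First I would check that $S$ is well defined on $L^2$ and bounded. Well-definedness requires that $S\varphi$ not depend on the choice of parametrization $e^{Jt}=e^{(-J)(-t)}$; this holds because the substitution $J\mapsto-J$ (which preserves $d\sigma$) forces $a_\varphi$ to be even and $b_\varphi$ to be odd in $t$, whence $a_\varphi(-t)-J\,b_\varphi(-t)=a_\varphi(t)+J\,b_\varphi(t)$. For boundedness I would use the two elementary moment identities on the sphere of imaginary units, namely $\int_\SS J\,d\sigma(J)=0$ and $\int_\SS J^2\,d\sigma(J)=-\sigma(\SS)=-1$, together with the orthogonality relation $\int_\SS|x+Jy|^2\,d\sigma(J)=|x|^2+|y|^2$ for $x,y\in\HH$, whose cross term $2\,\RRe(Jy\bar x)$ integrates to zero. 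The first two identities, with Cauchy--Schwarz, bound $\|a_\varphi\|_{L^2(dt)}$ and $\|b_\varphi\|_{L^2(dt)}$ by a multiple of $\|\varphi\|_2$, and the orthogonality relation reassembles these into $\|S\varphi\|_2^2\lesssim\|a_\varphi\|_{L^2(dt)}^2+\|b_\varphi\|_{L^2(dt)}^2\lesssim\|\varphi\|_2^2$.

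Next I would prove that $L^2_s=\{\varphi\in L^2:\ S\varphi=\varphi\}$. If $\varphi$ is slice, so that $\varphi(e^{Jt})=a(t)+J\,b(t)$, then the same two moment identities give $a_\varphi=a$ and $b_\varphi=b$, hence $S\varphi=\varphi$; conversely, if $S\varphi=\varphi$ a.e. then $\varphi(e^{Jt})=a_\varphi(t)+J\,b_\varphi(t)$ a.e., which is exactly the slice form \eqref{slice}, so $\varphi\in L^2_s$. Since $S$ is bounded, $S-\mathrm{Id}$ is continuous, and $L^2_s=\ker(S-\mathrm{Id})$ is closed, as desired.

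The only genuinely delicate points are bookkeeping: one must respect the non-commutativity of $\HH$ (the unit $J$ multiplies $b_\varphi$ on the left, and the moment identities must be applied on the correct side), and one must ensure that the two-to-one parametrization $(J,t)\mapsto e^{Jt}$ introduces no ambiguity, which the automatic even/odd parity of $a_\varphi,b_\varphi$ settles. The same computation can be recast sequentially if preferred: if $f_n\in L^2_s$ with $f_n\to f$ in $L^2$, then boundedness of the mean operators gives $a_{f_n}\to a_f$ and $b_{f_n}\to b_f$ in $L^2(dt)$, and the orthogonality relation shows that $a_f(t)+J\,b_f(t)$ is the $L^2$-limit of the $f_n$, so that $f(e^{Jt})=a_f(t)+J\,b_f(t)$ is slice.
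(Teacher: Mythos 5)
Your proof is correct, but it takes a genuinely different route from the paper's. The paper argues sequentially: given $f_n\in L^2_s$ with $f_n\to f$ in $L^2$, it forms $a(t)=\frac12[f(e^{It})+f(e^{-It})]$ and $b(t)=\frac I2[f(e^{-It})-f(e^{It})]$ from the representation formula \eqref{repfor}, inserts $f_n$ by the triangle inequality, and uses that $L^2(d\Sigma)$-convergence forces slice-wise $L^2(dt)$-convergence for $\sigma$-a.e.\ $I$ (strictly, along a subsequence) to conclude $\|f(e^{Jt})-a(t)-Jb(t)\|_2=0$. You instead realize $L^2_s$ as $\ker(S-\mathrm{Id})$ for the explicit averaging operator $S\varphi(e^{Jt})=\int_\SS(1-JK)\varphi(e^{Kt})\,d\sigma(K)$, which is precisely the integral operator of formula \eqref{meglio}; your moment identities $\int_\SS J\,d\sigma(J)=0$, $\int_\SS J^2\,d\sigma(J)=-1$ and the orthogonality relation $\int_\SS|x+Jy|^2\,d\sigma(J)=|x|^2+|y|^2$ are exactly what is needed, and your attention to the parity of $a_\varphi,b_\varphi$ under $(J,t)\mapsto(-J,-t)$ and to the side on which $J$ multiplies correctly disposes of the two genuine pitfalls. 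What your approach buys: closedness comes for free from continuity of $S$, with no subsequence extraction or slice-wise limit argument; moreover, once one also checks that $S$ is self-adjoint and idempotent (idempotence is immediate from your fixed-point characterization), you have identified $\Pi=S$ and obtained \eqref{meglio} directly, reversing the paper's logical order, in which \eqref{meglio} is derived from $\Pi$ after the Proposition is known. What the paper's argument buys is economy of means: it uses only the representation formula and Fubini, with no integration over $\SS$ at all, and so adapts verbatim to settings where the moment computations are unavailable.
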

\begin{proof}
Let $\{f_n\}_n$ be a sequence in $L^2_s$ converging in $L^2$ norm to $f\in L^2$. We want to show that the difference $f(e^{Jt})-(a(t)+Jb(t))$,  where $a(t)=\frac{1}{2}[f(e^{It})+f(e^{-It})]$ and $b(t)=\frac{I}{2}[f(e^{-It})-f(e^{It})]$ for any $I\in \s$, equals zero in $L^2$ norm.
For any $n\in \N$ we have
\begin{align*}
&\|f(e^{Jt})-\left(a(t)+Jb(t)\right)\|_{2}\\
&\le \|f(e^{Jt})-f_n(e^{Jt})\|_{2}+\|f_n(e^{Jt})-a_n(t)-Jb_n(t)\|_{2}+\|a_n(t)+Jb_n(t)-a(t)-Jb(t)\|_{2},
\end{align*}
with obvious notation. 
The first summand tends to $0$ as $n$ goes to $\infty$ and the second summand vanishes for any $n$ since $f_n\in L^2_s$. The last term can be bounded as   
\begin{align*}
\|a_n(t)+Jb_n(t)-a(t)-Jb(t)\|_{2}\le \|a_n(t)-a(t)\|_{2}+\|b_n(t)-b(t)\|_{2},
\end{align*}
where we can choose any slice $\p\B_I$ to compute both quantities on the right side.
The fact that $f_n$ converges to $f$ in $L^2$ norm yields that, for $\sigma$-almost every $I$,  $\int_{0}^{2\pi}|f_n(e^{It})-f(e^{It})|^2dt$ tends to $0$ as $n$ goes to $\infty$, and hence, that both $\|a_n(t)-a(t)\|_{2}$ and $\|b_n(t)-b(t)\|_{2}$ tend to $0$ as $n$ goes to $\infty$. Therefore we conclude that 
$f\in L^2_s$.
%
\end{proof}

We end the note with some remarks.
It is a peculiar fact that $1<\|\Pi\|_{\infty,\infty}<\infty$. Many natural projection operators defined on $L^2$ spaces extend to contractions to $L^p$ for $1<p<\infty$. 
This is the case with restriction operators $\varphi\mapsto \chi_E \varphi$, where $\chi_E$ is a measurable characteristic function; or, closer to the operator we are considering here,
the projection onto the space of functions endowed with some symmetry. For instance, let $L_\SS$ be the space of the functions $\varphi$ on $\partial\BB$ which are measurable and such that
$\varphi(e^{It})=a(t)$ depends on $t$ alone, set $L_\SS^p=L^p\cap L_\SS$, and let $\Pi_\SS:L^2\to L_\SS^2$ be the orthogonal projection. We might write
$$
\Pi_\SS\varphi(e^{It})=\int_{SO(3)}\varphi(e^{RIt})dh(R),
$$
where $SO(3)$ is the orthogonal group, acting on $\SS$ as $I\mapsto RI$, for any $R\in SO(3)$, and $dh$ is its (bi-invariant) Haar measure, normalized to have $h(SO(3))=1$.
Alternatively, we might ask $\varphi$ to be covariant under rotations, 
\begin{equation}\label{lagoscuro}
\varphi(e^{RIt})=R\varphi(e^{It}).
\end{equation}
(If $\varphi=e^{Ks}$, then $R\varphi=e^{RKs}$.)
The projection from $L^2$ to the subspace of the functions satisfying \eqref{lagoscuro} is
\begin{equation}\label{anita}
\Pi^\prime_\SS\varphi(e^{It})=\int_{SO(3)}R^{-1}\varphi(e^{R(I)t})dh(R).
\end{equation}
Observe that there is a formal analogy between \eqref{lagoscuro} and \eqref{anita}, and the integrals appearing in Corollary \ref{bene}.

At the other extreme, some projection operators on $L^2$ are unbounded in $L^\infty$. The prototype is the conjugate function operator in one complex variable, and an avatar of it in our
context might be
$$
\varphi (e^{It})=\sum_{n=-\infty}^{\infty}e^{Int}a_n(I)\ \mapsto\ \tilde{\Pi}\varphi (e^{It})=\sum_{n=0}^{\infty}e^{Int}a_n(I).
$$
Tipically, projections of this kind can be expressed in terms of singular integral operators.

The projection $\Pi$ onto slice functions seems to be intermediate between these two families of operators. 
It shares with the first the fact that it is linked with a geometric-algebraic invariance property. This might ``explain'' the boundedness in $L^\infty$. On the other hand, the invariance 
can not be simply stated in measure-theoretic terms, and this might ``explain'' why it is not a contraction of $L^\infty$: cancellations in the integrals play a role, and this causes the
$L^p$ norm of the operator to grow.

To have a hint as to which kind of cancellations are involved, it is instructive to consider the relation:
$$
\int_\SS|\Pi\varphi(e^{It})|^2d\sigma(I)=\left|\int_\SS\varphi(e^{It})d\sigma(I)\right|^2+\left|\int_\SS I\varphi(e^{It})d\sigma(I)\right|^2,
$$
which follows from Corollary \ref{bene}.
If the function $\varphi(e^{Jt})=a(t)+Jb(t)$ is already slice, then the first integral vanishes for $a=0$, while the second does for $b=0$. 
This is somehow a measure of the amount of cancellation which is going on in the integrals.


\end{document}